\newtheorem{remark}[theorem]{Remark}
\title{Mixed forward--backward stability of the two--level orthogonal Arnoldi method for quadratic problems}
\author{Karl Meerbergen\footnotemark[1] and Javier P\'{e}rez\footnotemark[2]}
\begin{document}
\maketitle
\slugger{simax}{xxxx}{xx}{x}{x--x}
\renewcommand{\thefootnote}{\fnsymbol{footnote}}
\footnotetext[1]{Department of Computer Science, KU Leuven, Celestijnenlaan 200A, 3001 Heverlee, Belgium. 
Email {\tt karl.meerbergen@cs.kuleuven.be}.}
\footnotetext[2]{Department of Computer Science, KU Leuven, Celestijnenlaan 200A, 3001 Heverlee, Belgium. Email: {\tt javier.perezalvaro@kuleuven.be}. 
Supported by KU Leuven Research Council grant OT/14/074 and the Interuniversity Attraction Pole DYSCO, initiated by the Belgian State Science Policy Office.}

\renewcommand{\thefootnote}{\arabic{footnote}}

\begin{abstract}
We revisit the numerical stability of the two--level orthogonal Arnoldi (TOAR) method for computing an orthonormal basis of a second--order Krylov subspace associated with two given matrices.
We show that the computed basis is close (on certain subspace metric sense) to a basis for a second--order Krylov subspace associated with nearby coefficient matrices,  provided that the norms of the given matrices are not too large or too small.
Thus, the results in this work provide for the first time conditions that guarantee the numerical stability of the TOAR method in computing  orthonormal bases of second--order Krylov subspaces.
We also study scaling the quadratic problem for improving the numerical stability of the TOAR procedure  when the norms of the matrices are too large or too small.
We show that in many cases the TOAR procedure applied to   scaled matrices is numerically stable when the scaling introduced by Fan, Lin and Van Dooren is used. 
\end{abstract}
\begin{keywords}
Krylov subspace, second--order Krylov subspace, Arnoldi algorithm, second--order Arnoldi algorithm, two--level orthogonal Arnoldi algorithm, numerical stability
\end{keywords}
\begin{AMS}
65F15, 65F30
\end{AMS}

\pagestyle{myheadings}
\thispagestyle{plain}

\section{Introduction}\label{sec:intro}

Given two complex matrices $A,B\in\mathbb{C}^{n\times n}$ and two starting vectors $r_{-1},r_0\in\mathbb{C}^{n}$, if we define the sequence $r_{-1},r_0,r_1,\hdots,r_{k-1}$ by the recurrence relation
\[
r_i = Ar_{i-1}+Br_{i-2}, \quad \mbox{for }i=1,2,\hdots,k-1,
\] 
then, the \emph{second--order Krylov subspace} associated with $A$ and $B$, introduced by Bai and Su \cite{SOAR}, is the subspace
\begin{equation*}\label{eq:second-order-Krylov}
\mathcal{G}_k(A,B;r_{-1},r_0):=\mathrm{span}\{r_{-1},r_0,r_1,\hdots,r_{k-1}\}.
\end{equation*}

Projection methods based on second--order Krylov subspaces have been found to be reliable procedures for obtaining good approximations to the solutions of (structured) quadratic eigenvalue problems \cite{SOAR,TOAR}, and for model order reduction of second--order dynamical systems \cite{Bai2005,TOAR} and second--order  time--delay systems \cite{delay}.
These procedures start by computing an orthonormal set of vectors  $\{q_1,q_2,\hdots,q_{k+1}\}$ such that 
\[
\mathrm{span}\{q_1,q_2,\hdots,q_{k+1}\}=\mathcal{G}_k(A,B;r_{-1},r_0).
\]
They continue by projecting the problem onto the subspace $\mathcal{G}_k(A,B;r_{-1},r_0)$, reducing the size of the original problem.
Finally, the projected problem is solved by using standard  algorithms for small/medium--sized dense matrices.
The convergence of these projection methods for quadratic eigenvalue problems is studied in \cite{q-Ritz}.

The \emph{second--order Arnoldi (SOAR)} method   and the \emph{two--level orthogonal Arnoldi (TOAR)} method \cite{SOAR,TOAR,TOAR_origin}, are two well--known algorithms for computing orthonormal bases of  second--order Krylov subspaces.
Both methods compute such bases by embedding the second--order Krylov subspaces into standard Krylov subspaces.
Moreover, while the SOAR method is prone to numerical instability \cite{TOAR}, the analysis performed in \cite{TOAR} provides solid theoretical evidence of the numerical stability of the TOAR method.
More precisely, the TOAR method is backward stable in computing an orthonormal basis of the Krylov subspace in which the second--order Krylov subspace is embedded.
In this work, we extend this result by showing that the computed orthonormal basis for the second--order Krylov subspace is close (in the standard subspace metric sense \cite{Pete}) to a second--order Krylov subspace associated with matrices $A+\Delta A$ and $B+\Delta B$.
This result is stated in Corollary \ref{cor:stability}, which is a consequence of the more general Theorem \ref{thm:main}.
These two results are the major contributions of this work.
Additionally, in Section \ref{sec:scaling}, we study how scaling the original quadratic problem affects the norms of $\Delta A$ and $\Delta B$. 

The notation used in the rest of the paper is as follows.
We use lowercase letters for vectors  and uppercase letters for matrices.
In addition, we use boldface letters to indicate that a matrix (resp. vector) will be considered as a $2\times 1$ block--matrix (resp. block--vector), whose blocks are denoted with superscripts as in
\[
\mathbf{A} =
\begin{bmatrix}
A^{[1]} \\ A^{[2]}
\end{bmatrix}.
\]
The $n\times n$ identity matrix is denoted by $I_{n}$.
By $0$ we denote the zero matrix, whose size should be clear in the context.
If $\mathcal{S},\mathcal{T}\subset \mathbb{C}^n$ are  subspaces with the same dimension, say $\ell$, we define the distance between $\mathcal{S}$ and $\mathcal{T}$ as
\begin{equation}\label{eq:def-distance}
\mathrm{dist}(\mathcal{S},\mathcal{T}):=\|P_\mathcal{S}-P_\mathcal{T}\|_2,
\end{equation}
where $P_\mathcal{S}$ and $P_\mathcal{T}$ are, respectively, orthogonal projectors onto $\mathcal{S}$ and $\mathcal{T}$.
It is well--known that the distance function \eqref{eq:def-distance} is a metric on the set of all $\ell$ dimensional subspaces of $\mathbb{C}^n$ \cite[Theorem 4.7]{Pete}. 
Given a matrix $Q\in\mathbb{C}^{n\times k}$, with $k\leq n$, we denote by $\mathrm{span}\{ Q \}$ the subspace spanned by the columns of $Q$.
We denote by $A^\dagger$ the Moore--Penrose pseudoinverse of a matrix $A$.
By $\epsilon$ we denote the unit roundoff, and we use the notation $O(\epsilon)$ for any quantity that is upper bounded by $\epsilon$ times a modest constant.

\section{The TOAR method and the stability analysis by Lu, Su and Bai}

We review in this section the main ideas underlying the TOAR method for computing an orthonormal basis of a second--order Krylov subspace, and the result of the stability analysis performed by Lu, Su and Bai \cite{TOAR}.
We essentially follow the presentation given in \cite{TOAR}.

We begin by recalling that the second--order Krylov subspace $\mathcal{G}_k(A,B;r_{-1},r_0)$ can be embedded in a Krylov subspace associated with the companion matrix
\begin{equation}\label{eq:companion}
C := \begin{bmatrix}
A & B \\ I_n & 0
\end{bmatrix}\in\mathbb{C}^{2n\times 2n}.
\end{equation}
Indeed, introducing the vector $\mathbf{v}_1:=\left[\begin{smallmatrix} r_0 \\ r_{-1}\end{smallmatrix}\right]$, the equality
\begin{equation}\label{eq:rel-SOKS-KS}
\mathcal{K}_k(C;\mathbf{v}_1):=\mathrm{span}\left\{ \mathbf{v}_1,C\mathbf{v}_1,\hdots, C^{k-1}\mathbf{v}_1 \right\}=\mathrm{span}\left\{\begin{bmatrix}r_0 \\ r_{-1} \end{bmatrix},\begin{bmatrix}r_1 \\ r_{0} \end{bmatrix}, \hdots, \begin{bmatrix}r_{k-1} \\ r_{k-2} \end{bmatrix} \right\}
\end{equation}
is immediately verified.
Hence, if $\mathbf{V}_k\in\mathbb{C}^{2n\times k}$ is a matrix whose columns form a  basis for $\mathcal{K}_k(C;\mathbf{v}_1)$ and writing
\[
\mathbf{V}_k=\begin{bmatrix}
V_k^{[1]} \\  V_k^{[2]}
\end{bmatrix} \quad \mbox{with} \quad  V_k^{[i]}\in\mathbb{C}^{n\times k}, \quad \mbox{for }i=1,2,
\] 
we readily obtain from \eqref{eq:rel-SOKS-KS} that
\begin{align}
&\mathrm{span}\left\{V_k^{[1]} \right\} = \mathrm{span}\left\{ r_0,r_1,\hdots,r_{k-1}\right\} \quad \mbox{and} \\
&\mathrm{span}\left\{V_k^{[2]} \right\} =\mathrm{span}\left\{ r_{-1},r_0,\hdots,r_{k-2}\right\},
\end{align}
and, therefore,
\[
\mathrm{span}\left\{ \begin{bmatrix} V_k^{[1]} & V_k^{[2]} \end{bmatrix} \right\}= \mathcal{G}_k(A,B;r_{-1},r_0).
\]
Thus, introducing $d_k:=\mathrm{dim}(\mathcal{G}_k(A,B;r_{-1},r_0))\leq k+1$ and denoting by $Q_k\in\mathbb{C}^{n\times d_k}$ a matrix whose columns form a basis for $\mathcal{G}_k(A,B;r_{-1},r_0)$, we can write
\begin{equation}\label{eq:two-levels}
\mathbf{V}_k=
\begin{bmatrix}
V_k^{[1]} \\  V_k^{[2]} 
\end{bmatrix}=
\begin{bmatrix}
Q_kU_k^{[1]} \\  Q_kU_k^{[2]} 
\end{bmatrix}=:
(I_2\otimes Q_k)\mathbf{U}_k,
\end{equation}
for some matrix $\mathbf{U}_k\in\mathbb{C}^{2d_k\times k}$. 
We will refer to \eqref{eq:two-levels} as a \emph{compact representation} of the matrix $\mathbf{V}_k$.
Furthermore, from \eqref{eq:two-levels}, we see that one (numerically expensive) possibility for computing a  basis for the subspace $\mathcal{G}_k(A,B;r_{-1},r_0)$ is by extracting it from a rank--revealing decomposition of the matrix 
\[
\begin{bmatrix} V_k^{[1]} & V_k^{[2]} \end{bmatrix}.
\]
The TOAR method provides a stable and computationally--efficient alternative for computing such a basis.

Before introducing the TOAR method, let us recall that a matrix $\mathbf{V}_k$ whose columns form an orthonormal basis for $\mathcal{K}_k(C;\mathbf{v}_1)$ can be computed in a numerically stable way by applying the Arnoldi algorithm to the companion matrix \eqref{eq:companion}.
Certainly, in exact arithmetic, running $k$ steps of the Arnoldi algorithm produces matrices satisfying
\begin{equation}\label{eq:Arnoldi-decomp}
\begin{bmatrix}
A & B \\ I_n & 0
\end{bmatrix}\mathbf{V}_k = \mathbf{V}_{k+1}\underline{H}_k,
\end{equation}
where $\underline{H}_k\in\mathbb{C}^{(k+1)\times k}$ is an upper--Hessenberg matrix  and the columns of $\mathbf{V}_{k}$ and $\mathbf{V}_{k+1}=\begin{bmatrix} \mathbf{V}_k & \mathbf{v}_{k+1}\end{bmatrix}$ form orthonormal bases for $\mathcal{K}_k(C;\mathbf{v}_1)$ and $\mathcal{K}_{k+1}(C;\mathbf{v}_1)$, respectively.
We will refer to \eqref{eq:Arnoldi-decomp} as an \emph{Arnoldi decomposition}.

By combining the compact representation \eqref{eq:two-levels} with the Arnoldi decomposition \eqref{eq:Arnoldi-decomp}, we get the decomposition
\begin{equation}\label{eq:TOAR-decomp}
\begin{bmatrix}
A & B \\ I_n & 0
\end{bmatrix}(I_2\otimes Q_k)\mathbf{U}_k = (I_2\otimes Q_{k+1})\mathbf{U}_{k+1}\underline{H}_k,
\end{equation}
which will be referred to as a \emph{TOAR decomposition}.
The TOAR method is a memory--efficient variant of the  Arnoldi method \cite{Kressner-Roman,Q-Arnoldi,CORK} applied to the companion matrix \eqref{eq:companion} for computing \eqref{eq:TOAR-decomp}.
By exploiting the compact representation of $\mathbf{V}_k$ in \eqref{eq:two-levels}--\eqref{eq:TOAR-decomp}, it computes matrices $Q_k$ and $\mathbf{U}_k$ with orthonormal columns and the Hessenberg matrix $\underline{H}_k$, without forming explicitly the matrix $\mathbf{V}_k$.
Notice in passing that the orthonormality of the columns of $\mathbf{U}_k$ and $Q_k$ implies the orthonormality of the columns of $\mathbf{V}_k$.
We refer the reader to \cite{Roman2016,TOAR} for implementation details, and to \cite{Kressner-Roman,MP,CORK} for extensions of the TOAR algorithm to other companion matrices.

In the presence of finite precision arithmetic, the TOAR method is numerically stable \cite{TOAR,MP}, provided that the orthogonalization steps  have been properly carried out \cite{ortho}.
More precisely, the computed matrices have, up to working precision, orthonormal columns and, together with the computed matrix $\underline{H}_k$, satisfy 
\begin{equation}\label{eq:residual}
R = 
\begin{bmatrix}
A & B\\
I_n & 0
\end{bmatrix}
(I_2\otimes Q_k)\mathbf{U}_k -
(I_2\otimes Q_{k+1})\mathbf{U}_{k+1}\underline{H}_k,
\end{equation}
for some matrix $R\in\mathbb{C}^{2n\times k}$ with $\|R\|_2=O(\epsilon)\|C\|_2$.
Then, it is standard to show from \eqref{eq:residual} that the columns of the computed matrix $\mathbf{V}_k=(I_2\otimes Q_k)\mathbf{U}_k$ form a basis for a Krylov subspace $\mathcal{K}_k(C+E;\mathbf{v}_1)$, for some matrix $E$ with $\|E\|_2=O(\epsilon)\|C\|_2$ \cite{TOAR,Pete2}.
 However, the perturbation $E$ destroys the companion matrix structure, i.e., the zero and identity blocks of $C$ are not present  in $C+E$.
 Therefore, it is not clear whether or not the columns of the computed matrix $Q_k$ span a basis for some second--order Krylov subspace associated with  perturbed matrices $A+\Delta A$ and $B+\Delta B$.
 This problem, left open in \cite{TOAR}, is solved in the following section.

\section{Mixed forward--backward stability of the TOAR method in computing an orthonormal basis of $\mathcal{G}_k(A,B;r_{-1},r_0)$}\label{sec:analysis}

The starting point is the residual \eqref{eq:residual}, and our goal is to throw it back onto the matrices $A$ and $B$.
This is done in Theorem \ref{thm:main}, which is one of our main results.
As a corollary, we will obtain a mixed forward--backward stability result for the TOAR method in Corollary \ref{cor:stability}.
The proof of Theorem \ref{thm:main} is postponed to the end of the section.

\begin{theorem}\label{thm:main}
Let $A,B\in\mathbb{C}^{n\times n}$ and let $C$ be the companion matrix \eqref{eq:companion}.
Let $\underline{H}_k\in\mathbb{C}^{(k+1)\times k}$, and let $Q_k\in\mathbb{C}^{n\times d_k}$, $Q_{k+1}=\begin{bmatrix} Q_k & q_{k+1}\end{bmatrix}\in\mathbb{C}^{n\times (d_k+1)}$, with $d_k\leq k+1$, be full--column--rank matrices.
Let
 \[
\mathbf{U}_k=\begin{bmatrix} U_k^{[1]} \\  U_k^{[2]} \end{bmatrix}\in\mathbb{C}^{2d_k\times k} \quad \mbox{and} \quad \mathbf{U}_{k+1}=\begin{bmatrix} U_{k+1}^{[1]} \\  U_{k+1}^{[2]} \end{bmatrix}=
\left[\begin{array}{cc} \begin{matrix} U_k^{[1]} & x_k \\ 0 & \beta_k \end{matrix} \\ \begin{matrix} U_k^{[2]} & y_k \\ 0 & 0 \end{matrix} \end{array}\right] \in\mathbb{C}^{2(d_k+1)\times (k+1)}
\]
 be also full--column--rank matrices.
 Let $R$  be the  residual \eqref{eq:residual}, let $E=-R\mathbf{U}_k^\dagger(I_2\otimes Q_k^\dagger)$, and let $\mathcal{S}_k\subseteq\mathbb{C}^n$ be the subspace spanned by the columns of $Q_{k}$.
If $\|E\|_2 < 1$, then there exists a $d_k$--dimensional second--order Krylov subspace $\mathcal{G}_k(A+\Delta A,B+\Delta B;\widetilde{r}_{-1},\widetilde{r}_0)$ such that 
\begin{equation}\label{eq:distance}
\mathrm{dist}\left(\mathcal{S}_k,\mathcal{G}_k(A+\Delta A,B+\Delta B;\widetilde{r}_{-1},\widetilde{r}_0)\right)\leq \frac{\|E\|_2}{1-\|E\|_2},
\end{equation}
for some vectors $\widetilde{r}_{-1}$ and $\widetilde{r}_0$, and some matrices $\Delta A$ and $\Delta B$ with
\begin{equation}\label{eq:bound-A}
\|\Delta A\|_2 \leq \|E\|_2+\frac{\|E\|_2(1+\|E\|_2)}{1-\|E\|_2},
\end{equation}
and 
\begin{equation}\label{eq:bound-B}
\|\Delta B\|_2 \leq \max\{1,\|A\|_2,\|B\|_2\}\left(
\|E\|_2(2+\|E\|_2)+\frac{\|E\|_2(1+\|E\|_2)^2}{1-\|E\|_2}
\right).
\end{equation}
\end{theorem}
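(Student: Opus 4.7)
My plan is to rewrite the residual as a perturbed Arnoldi-type relation for $C+E$, apply an explicit block-triangular similarity to $C+E$ that restores the companion structure, and then read $\Delta A$, $\Delta B$ off the top blocks while identifying the perturbed second-order Krylov subspace from the transformed basis. First, the full-column-rank hypotheses give $\mathbf{U}_k^\dagger\mathbf{U}_k = I_k$ and $Q_k^\dagger Q_k = I_{d_k}$, so $E\mathbf{V}_k = -R$ with $\mathbf{V}_k := (I_2\otimes Q_k)\mathbf{U}_k$ and therefore $(C+E)\mathbf{V}_k = \mathbf{V}_{k+1}\underline{H}_k$. Partitioning $E$ into four $n\times n$ blocks $E_{ij}$, each of norm at most $\|E\|_2$, the matrix $C+E$ is in companion form except that its $(2,1)$ and $(2,2)$ blocks are $I_n+E_{21}$ and $E_{22}$ rather than $I_n$ and $0$.

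Since $\|E\|_2<1$, the block $F := (I_n+E_{21})^{-1}$ exists with $\|F\|_2\le 1/(1-\|E\|_2)$; I would introduce the block-triangular similarity
\[
S := \begin{bmatrix} I_n & FE_{22} \\ 0 & F \end{bmatrix},
\]
whose off-diagonal entry $FE_{22}$ is forced by the requirement that the lower blocks of $S(C+E)S^{-1}$ collapse to $\begin{bmatrix} I_n & 0 \end{bmatrix}$. A direct $2\times 2$ block computation then gives $S(C+E)S^{-1} = \begin{bmatrix} A+\Delta A & B+\Delta B \\ I_n & 0 \end{bmatrix}$, with
\[
\Delta A = E_{11} + FE_{22}(I_n+E_{21}), \quad \Delta B = BE_{21} + E_{12}(I_n+E_{21}) - (A+E_{11})FE_{22}(I_n+E_{21}).
\]
The triangle inequality together with the estimate $\|A\|_2 + \|E\|_2 \le M(1+\|E\|_2)$ for $M := \max\{1,\|A\|_2,\|B\|_2\}$ will then yield exactly the bounds \eqref{eq:bound-A} and \eqref{eq:bound-B}.

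Conjugating the perturbed Krylov relation by $S$ yields
\[
\begin{bmatrix} A+\Delta A & B+\Delta B \\ I_n & 0 \end{bmatrix}(S\mathbf{V}_k) = (S\mathbf{V}_{k+1})\underline{H}_k,
\]
a genuine Arnoldi decomposition in companion form. The companion/second-order-Krylov correspondence recalled in Section~2 then identifies $\mathcal{G}_k(A+\Delta A,B+\Delta B;\widetilde{r}_{-1},\widetilde{r}_0)$ as the range of $\begin{bmatrix}\widetilde{V}_k^{[1]} & \widetilde{V}_k^{[2]}\end{bmatrix}$, where $\widetilde{V}_k^{[1]} = V_k^{[1]} + FE_{22}V_k^{[2]}$, $\widetilde{V}_k^{[2]} = FV_k^{[2]}$, and $\widetilde{r}_0 = v_1^{[1]} + FE_{22}v_1^{[2]}$, $\widetilde{r}_{-1} = Fv_1^{[2]}$ are extracted from the first column of $S\mathbf{v}_1$. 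The difference $\begin{bmatrix}\widetilde{V}_k^{[1]} & \widetilde{V}_k^{[2]}\end{bmatrix} - \begin{bmatrix}V_k^{[1]} & V_k^{[2]}\end{bmatrix}$ factors as $F\begin{bmatrix}E_{22}V_k^{[2]} & -E_{21}V_k^{[2]}\end{bmatrix}$ and has operator norm at most $\|E\|_2/(1-\|E\|_2)$, using $\|V_k^{[2]}\|_2 \le \|\mathbf{V}_k\|_2 \le 1$ (inherited from the orthonormality of $\mathbf{V}_k$) and the fact that $[E_{21}, E_{22}]$ is a block row of $E$. A standard sine-$\Theta$ perturbation bound for subspaces of equal dimension will then deliver \eqref{eq:distance}.

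The hardest part, in my expectation, is guessing the correct off-diagonal entry $FE_{22}$ in $S$ so that both the $(2,1)$ and $(2,2)$ blocks of $S(C+E)S^{-1}$ simultaneously reduce to $I_n$ and $0$; this is a non-obvious choice, forced by requiring that the two corrections happen at once rather than sequentially. A secondary technical difficulty is teasing the factor $M = \max\{1,\|A\|_2,\|B\|_2\}$ cleanly out of the expression for $\Delta B$, whose terms mix $A$ and $B$ multiplicatively with the $E_{ij}$ blocks in a slightly delicate way.
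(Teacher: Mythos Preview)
Your construction of $E$, the similarity $S$, and the resulting formulas for $\Delta A$ and $\Delta B$ coincide exactly with the paper's Lemmas~3.4 and~3.5, and your norm estimates for \eqref{eq:bound-A}--\eqref{eq:bound-B} are the same as the paper's. The divergence, and the gap, is in your derivation of the distance bound \eqref{eq:distance}.

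Your plan is to bound the additive difference $\bigl[\widetilde V_k^{[1]}\;\widetilde V_k^{[2]}\bigr]-\bigl[V_k^{[1]}\;V_k^{[2]}\bigr]$ and then invoke a ``standard sine-$\Theta$ perturbation bound for subspaces of equal dimension.'' There are three problems with this. First, the hypotheses of Theorem~\ref{thm:main} only assume $Q_k$ and $\mathbf U_k$ have full column rank, not orthonormal columns, so your appeal to $\|\mathbf V_k\|_2\le 1$ is unjustified at this level of generality. Second, and more seriously, $\bigl[V_k^{[1]}\;V_k^{[2]}\bigr]$ has $2k$ columns but rank only $d_k\le k+1$; an additive Wedin-type bound on the column space would bring in its smallest \emph{nonzero} singular value, which the hypotheses do not control, so the clean bound $\|E\|_2/(1-\|E\|_2)$ does not follow. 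Third, you assert equal dimension without proof: from $\widetilde V_k^{[1]}=Q_kU_k^{[1]}+FE_{22}Q_kU_k^{[2]}$ one sees only that $\mathrm{span}\bigl\{\widetilde V_k^{[1]},\widetilde V_k^{[2]}\bigr\}\subseteq\mathrm{span}\{Q_k\}+\mathrm{span}\{FQ_k\}$, which could have dimension as large as $2d_k$.

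What you have not used, and what the paper exploits, is the structural hypothesis on $\mathbf U_{k+1}$, specifically the zero in the last row of $U_{k+1}^{[2]}$. Reading the \emph{bottom} block row of the transformed Arnoldi relation gives $\widetilde V_k^{[1]}=\widetilde V_{k+1}^{[2]}\underline H_k = FQ_{k+1}U_{k+1}^{[2]}\underline H_k$, and the vanishing last row of $U_{k+1}^{[2]}$ forces this to lie in $\mathrm{span}\{FQ_k\}$. Hence the perturbed second-order Krylov subspace is exactly $\mathrm{span}\{(I_n+E_{21})^{-1}Q_k\}$, which is automatically $d_k$-dimensional since $F$ is invertible. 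The distance bound then follows from a \emph{multiplicative} perturbation result (the paper's Lemma~3.8, from \cite{Pseudo}): writing $Q_k=(I_n+E_{21})\widetilde Q_k$ gives $\mathrm{dist}(\mathcal S_k,\widetilde{\mathcal S}_k)\le\|(I_n+E_{21})^{-1}E_{21}\|_2\le\|E\|_2/(1-\|E\|_2)$, with no dependence on the conditioning of $Q_k$ or $\mathbf U_k$.
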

\begin{remark}
The structure of the matrix $\mathbf{U}_{k+1}$ in Theorem \ref{thm:main}  is imposed to make the matrix compatible with a TOAR decomposition \cite[Lemma 31]{TOAR}.
In particular, this is the structure of the computed matrix $\mathbf{U}_{k+1}$ by the TOAR method in floating point arithmetic \cite{TOAR}.
This structure for $\mathbf{U}_{k+1}$ will be assumed throughout the rest of the section.
\end{remark}

As an immediate corollary of Theorem \ref{thm:main}, we obtain the following mixed forward--backward stability result for the TOAR method. 
\begin{corollary}\label{cor:stability}
Let $Q_{k}\in\mathbb{C}^{n\times d_k}$ be the matrix obtained by the TOAR method run in a computer with unit roundoff equal to $\epsilon$, i.e., the computed quantities satisfy \eqref{eq:residual} with $\|R\|_2=O(\epsilon)\|C\|_2$.
Assume that $Q_k$ has full column rank, and let $\mathcal{S}_k$ be the subspace spanned by the columns of $Q_{k}$.
Then, to first order in $\epsilon$,  there exists a $d_k$--dimensional second--order Krylov subspace $\mathcal{G}_k(A+\Delta A,B+\Delta B;\widetilde{r}_{-1},\widetilde{r}_0)$ such that
\begin{equation}\label{eq:distance-TOAR}
\mathrm{dist}\left(\mathcal{S}_k,\mathcal{G}_k(A+\Delta A,B+\Delta B;\widetilde{r}_{-1},\widetilde{r}_0)\right)=O(\epsilon)\max\{1,\|A\|_2,\|B\|_2 \},
\end{equation}
 for some vectors $\widetilde{r}_{-1}$ and $\widetilde{r}_0$, and some matrices $\Delta A$ and $\Delta B$ with
\begin{align}\label{eq:DeltaA-DeltaB}
\begin{split}
&\|\Delta A\|_2=O(\epsilon)\max\{1,\|A\|_2,\|B\|_2 \}, \quad \mbox{and}\\
&\|\Delta B\|_2=O(\epsilon) \left(\max\{1,\|A\|_2,\|B\|_2 \}\right)^2.
\end{split}
\end{align}
In words, the columns of the computed matrix $Q_{k}$ span a subspace  close to a  second--order Krylov subspace associated with matrices $A+\Delta A$ and $B+\Delta B$.
\end{corollary}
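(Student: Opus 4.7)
The plan is to derive Corollary \ref{cor:stability} as a direct first-order-in-$\epsilon$ specialization of Theorem \ref{thm:main}. The ingredients we need are already in place: the TOAR method produces computed matrices $Q_k$, $Q_{k+1}$, $\mathbf{U}_k$, $\mathbf{U}_{k+1}$, and $\underline{H}_k$ whose columns are orthonormal up to working precision, together with a residual $R$ satisfying $\|R\|_2=O(\epsilon)\|C\|_2$. The structural hypothesis on $\mathbf{U}_{k+1}$ required by Theorem \ref{thm:main} is precisely the block shape produced by the TOAR iteration (as recalled in the remark following the theorem), so once we verify the hypothesis $\|E\|_2<1$ holds to first order in $\epsilon$, we can invoke the theorem and simply expand its conclusions.

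First I would control $\|E\|_2$. Because the columns of $Q_k$ and of $\mathbf{U}_k$ are orthonormal to working precision, standard perturbation estimates for the pseudoinverse give $\|Q_k^\dagger\|_2=1+O(\epsilon)$ and $\|\mathbf{U}_k^\dagger\|_2=1+O(\epsilon)$, so that
\[
\|E\|_2=\|R\mathbf{U}_k^\dagger(I_2\otimes Q_k^\dagger)\|_2\leq \|R\|_2\,\|\mathbf{U}_k^\dagger\|_2\,\|Q_k^\dagger\|_2=O(\epsilon)\|C\|_2.
\]
Next I would bound $\|C\|_2$ in terms of the data. Since
\[
C=\begin{bmatrix}A & B \\ I_n & 0\end{bmatrix},
\]
a direct estimate (for instance by the Frobenius bound on $2\times 2$ block norms) yields $\|C\|_2\leq \sqrt{3}\,\max\{1,\|A\|_2,\|B\|_2\}$. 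Combining the two bounds,
\[
\|E\|_2=O(\epsilon)\,\max\{1,\|A\|_2,\|B\|_2\},
\]
which is in particular less than $1$ for $\epsilon$ small enough, so the hypothesis of Theorem \ref{thm:main} is met.

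With the theorem applicable, the proof is a matter of substituting this estimate into \eqref{eq:distance}, \eqref{eq:bound-A}, and \eqref{eq:bound-B} and keeping only the leading order in $\epsilon$. For the distance, $\|E\|_2/(1-\|E\|_2)=\|E\|_2+O(\|E\|_2^2)$ yields \eqref{eq:distance-TOAR}. For $\Delta A$, the bound \eqref{eq:bound-A} reduces to $2\|E\|_2+O(\|E\|_2^2)=O(\epsilon)\max\{1,\|A\|_2,\|B\|_2\}$. For $\Delta B$, the prefactor $\max\{1,\|A\|_2,\|B\|_2\}$ in \eqref{eq:bound-B} multiplies a quantity that is $2\|E\|_2+O(\|E\|_2^2)$, producing $O(\epsilon)(\max\{1,\|A\|_2,\|B\|_2\})^2$ as in \eqref{eq:DeltaA-DeltaB}.

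I expect no serious obstacle: the content of the corollary is exactly a quantitative unpacking of Theorem \ref{thm:main}. The only mildly delicate point is justifying $\|\mathbf{U}_k^\dagger\|_2=1+O(\epsilon)$ and $\|Q_k^\dagger\|_2=1+O(\epsilon)$ from the TOAR orthogonalization bounds of \cite{TOAR,ortho}; once these are in hand, every subsequent step is a routine asymptotic expansion.
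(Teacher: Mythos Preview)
Your proposal is correct and follows essentially the same route as the paper: cite the Lu--Su--Bai analysis to get $\|Q_k^\dagger\|_2,\|\mathbf{U}_k^\dagger\|_2=1+O(\epsilon)$, deduce $\|E\|_2=O(\epsilon)\|C\|_2=O(\epsilon)\max\{1,\|A\|_2,\|B\|_2\}$, and then read off \eqref{eq:distance-TOAR} and \eqref{eq:DeltaA-DeltaB} from Theorem~\ref{thm:main} to first order. The only (inconsequential) slip is that the leading coefficient in the $\Delta B$ bound is $3\|E\|_2$, not $2\|E\|_2$, but this is absorbed by the $O(\epsilon)$ notation.
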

\begin{proof}
The error analysis by Lu, Su and Bai \cite{TOAR} shows that the columns of the matrices $Q_k$ and $\mathbf{U}_k$ computed by the TOAR method are both well--conditioned bases of the subspaces they span, that is,
\[
\|Q_k\|_2\|Q_k^\dagger\|_2 = 1 + O(\epsilon) \quad \mbox{and} \quad \|\mathbf{U}_k\|_2\|\mathbf{U}_k^\dagger\|_2 = 1 + O(\epsilon),
\]
and that the norm of the residual \eqref{eq:residual} for the computed matrices is a modest multiple of the unit roundoff times the norm of the companion matrix. 
Setting $E=-R\mathbf{U}_k^\dagger(I_2\otimes Q_k^\dagger)$,  we obtain that Theorem \ref{thm:main} holds with
\begin{equation}\label{eq:norm_E}
\|E\|_2 =O(\epsilon)\|C\|_2 =O(\epsilon)\max\{1,\|A\|_2,\|B\|_2 \}.
\end{equation}
To finish the proof, it suffices to notice that \eqref{eq:distance}, \eqref{eq:bound-A} and \eqref{eq:bound-B}, together with \eqref{eq:norm_E}, imply \eqref{eq:distance-TOAR} and \eqref{eq:DeltaA-DeltaB}  to first order in $\epsilon$.
\end{proof}

Paraphrasing Higham \cite{Higham}, Theorem \ref{thm:main} tells us that, as long as the norms of the matrices $A$ and $B$ are not too large or too small, the basis computed by the TOAR method is ``almost the right answer for almost the right data''.
Hence, in this situation, TOAR is  numerically stable in computing orthonormal bases of second--order Krylov subspaces.
When the norms of $A$ and $B$ are very large or very small, one could consider scaling the problem for improving the stability properties of TOAR.
This is considered in Section \ref{sec:scaling}.

 The proof of Theorem \ref{thm:main} requires several technical results that we state in the following lemmas.
Lemma \ref{lemma:back-to-C} projects the  residual \eqref{eq:residual} back on the companion matrix \eqref{eq:companion}.
\begin{lemma}\label{lemma:back-to-C}
If $R$ denotes the  residual \eqref{eq:residual}, then the matrix $E=-R\mathbf{U}_k^\dagger(I_2\otimes Q_k^\dagger)$ satisfies the decomposition
\begin{equation}\label{eq:TOAR-pert}
\Bigg{(} \begin{bmatrix}
A & B\\
I_n & 0
\end{bmatrix}
+\underbrace{\begin{bmatrix} E_{11} & E_{12} \\ E_{21} & E_{22} \end{bmatrix}}_{=E}
\Bigg{)}
(I_2\otimes Q_k)\mathbf{U}_k =
(I_2\otimes Q_{k+1})\mathbf{U}_{k+1}\underline{H}_k,
\end{equation}
where $E$ in \eqref{eq:TOAR-pert} has been partitioned conformably to the partition of the companion matrix \eqref{eq:companion}.
In other words, the matrices satisfy an exact TOAR decomposition for a perturbed  matrix $C+E$.
\end{lemma}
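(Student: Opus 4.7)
The plan is a direct algebraic verification. Starting from the definition $E := -R\mathbf{U}_k^\dagger(I_2\otimes Q_k^\dagger)$, I will simply compute $E(I_2\otimes Q_k)\mathbf{U}_k$ and show that it equals $-R$; then the claim follows by substituting this identity into the residual equation \eqref{eq:residual} and rearranging.

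More concretely, I would first pull out the product
\[
E(I_2\otimes Q_k)\mathbf{U}_k = -R\,\mathbf{U}_k^\dagger(I_2\otimes Q_k^\dagger)(I_2\otimes Q_k)\mathbf{U}_k,
\]
and then use the standard Kronecker product identity $(I_2\otimes Q_k^\dagger)(I_2\otimes Q_k)=I_2\otimes (Q_k^\dagger Q_k)$. Since $Q_k\in\mathbb{C}^{n\times d_k}$ has full column rank by hypothesis, the Moore--Penrose pseudoinverse satisfies $Q_k^\dagger Q_k = I_{d_k}$, so the Kronecker product collapses to $I_{2d_k}$. The expression therefore simplifies to $-R\,\mathbf{U}_k^\dagger\mathbf{U}_k$, and the full column rank of $\mathbf{U}_k\in\mathbb{C}^{2d_k\times k}$ gives $\mathbf{U}_k^\dagger\mathbf{U}_k = I_k$, yielding $E(I_2\otimes Q_k)\mathbf{U}_k=-R$ as desired.

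Once this is in hand, I rewrite the residual equation \eqref{eq:residual} as
\[
\begin{bmatrix} A & B \\ I_n & 0 \end{bmatrix}(I_2\otimes Q_k)\mathbf{U}_k + E(I_2\otimes Q_k)\mathbf{U}_k = (I_2\otimes Q_{k+1})\mathbf{U}_{k+1}\underline{H}_k,
\]
which, after factoring the left-hand side and partitioning $E$ conformably with the $n\times n$ blocks of $C$, is precisely \eqref{eq:TOAR-pert}. There is no genuine obstacle in this argument; the only thing to be careful about is that both pseudoinverse identities $Q_k^\dagger Q_k=I_{d_k}$ and $\mathbf{U}_k^\dagger\mathbf{U}_k=I_k$ rely crucially on the full column rank assumptions stated in Theorem \ref{thm:main}, which justify invoking the left-inverse property of the Moore--Penrose pseudoinverse.
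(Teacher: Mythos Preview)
Your proposal is correct and follows exactly the same approach as the paper's proof, which simply states that it is immediately verified that $E(I_2\otimes Q_k)\mathbf{U}_k=-R$. You have merely made explicit the pseudoinverse and Kronecker product identities (relying on the full column rank hypotheses of Theorem~\ref{thm:main}) that underlie this one-line verification.
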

\begin{proof}
It is immediately verified that $E(I_2\otimes Q_k)\mathbf{U}_k=-R$. 
\end{proof}
 
 The decomposition \eqref{eq:TOAR-pert} is an exact TOAR decomposition, but the matrix $C+E$ is not a companion matrix.
 Thus, we cannot yet associated $Q_k$ with a second--order Krylov subspace.
 Nevertheless, Lemma \ref{lemma:recover-comp} shows that, as long as the norm of the perturbation $E$ in Lemma \ref{lemma:back-to-C} is small enough, the companion structure of the perturbed companion matrix in \eqref{eq:TOAR-pert} can be recovered via a similarity transformation.
\begin{lemma}\label{lemma:recover-comp}
Let $C$ be the companion matrix in \eqref{eq:companion}.
If $E=\left[ \begin{smallmatrix} E_{11} & E_{12} \\ E_{21} & E_{22}\end{smallmatrix}\right]$, where $E_{ij}\in\mathbb{C}^{n\times n}$, is a matrix such that $\|E_{21}\|_2 < 1$, then
\[
\begin{bmatrix}
I_n & (I_n+E_{21})^{-1}E_{22}\\
0 & (I_n+E_{21})^{-1}
\end{bmatrix}
(C+E)=
\begin{bmatrix}
A+\Delta A & B+\Delta B \\ I_n & 0 
\end{bmatrix}
\begin{bmatrix}
I_n & (I_n+E_{21})^{-1}E_{22}\\
0 & (I_n+E_{21})^{-1}
\end{bmatrix},
\]
where the matrices $\Delta A$ and $\Delta B$ are equal to
\begin{align}
\label{eq:DeltaA}&\Delta A = E_{11}+(I_n+E_{21})^{-1}E_{22}(I_n+E_{21}), \quad \mbox{and} \\
\label{eq:DeltaB}&\Delta B = BE_{21}+E_{12}(I_n+E_{21})-(A+E_{11})(I_n+E_{21})^{-1}E_{22}(I_n+E_{21}).
\end{align}
In words, the companion structure can be recovered via a similarity transformation close to the identity.
\end{lemma}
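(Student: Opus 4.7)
The plan is to verify the claimed identity by direct block matrix computation, using the structure of the transformation matrix
\[
T := \begin{bmatrix} I_n & (I_n+E_{21})^{-1}E_{22} \\ 0 & (I_n+E_{21})^{-1}\end{bmatrix}
\]
to dictate the forms of $\Delta A$ and $\Delta B$. The hypothesis $\|E_{21}\|_2<1$ enters only through the standard Neumann series argument, which guarantees that $I_n+E_{21}$ is invertible, so that $T$ is well defined and, being block upper triangular with invertible diagonal blocks, invertible as well. Writing $\widetilde C := \left[\begin{smallmatrix} A+\Delta A & B+\Delta B \\ I_n & 0\end{smallmatrix}\right]$ for the target matrix, the identity to prove is $T(C+E)=\widetilde C\,T$; in other words, $T$ performs a similarity transformation bringing $C+E$ back to companion form.

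First I would expand the left-hand side $T(C+E)$ as a $2\times 2$ block product and observe that the $(2,1)$ block becomes $(I_n+E_{21})^{-1}(I_n+E_{21})=I_n$ and the $(2,2)$ block becomes $(I_n+E_{21})^{-1}E_{22}$, which are exactly the corresponding blocks of $\widetilde C\,T$. Thus the bottom block row is matched automatically; this is not a coincidence but the design criterion behind $T$, chosen precisely to restore the structure $[I_n\;\;0]$ in the bottom block row after the perturbation.

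The top block row then determines $\Delta A$ and $\Delta B$. Matching the $(1,1)$ block reads $(A+E_{11})+(I_n+E_{21})^{-1}E_{22}(I_n+E_{21})=A+\Delta A$, which is precisely \eqref{eq:DeltaA}. For the $(1,2)$ block, the left-hand side equals $(B+E_{12})+(I_n+E_{21})^{-1}E_{22}^{\,2}$ while the right-hand side equals $(A+\Delta A)(I_n+E_{21})^{-1}E_{22}+(B+\Delta B)(I_n+E_{21})^{-1}$. Substituting the expression for $\Delta A$ already obtained causes the cross term $(I_n+E_{21})^{-1}E_{22}^{\,2}$ to cancel on both sides, leaving a linear equation for $B+\Delta B$; right-multiplying by $(I_n+E_{21})$ and using $B(I_n+E_{21})-B=BE_{21}$ then yields \eqref{eq:DeltaB}.

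There is no genuine obstacle: the argument is algebraic bookkeeping once one recognises that the shape of $T$ is forced by the requirement that the bottom block row regain the structure $[I_n\;\;0]$. The only mild subtlety is to solve for $\Delta A$ first from the $(1,1)$ block and reuse that formula when matching the $(1,2)$ block, so that the quadratic-in-$E_{22}$ contribution cancels cleanly and the stated expression for $\Delta B$ emerges without any remaining $(I_n+E_{21})^{-1}E_{22}^{\,2}$ term.
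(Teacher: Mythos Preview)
Your proposal is correct and follows precisely the approach of the paper, which simply notes that $\|E_{21}\|_2<1$ ensures $I_n+E_{21}$ is nonsingular and then states that the identity is verified by direct matrix multiplication. You have supplied the blockwise details of that verification, including the observation that the bottom row is matched by construction of $T$ and that the $(I_n+E_{21})^{-1}E_{22}^{\,2}$ terms cancel in the $(1,2)$ block; all of this is accurate.
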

\begin{proof}
The condition $\|E_{21}\|_2<1$ guarantees the nonsingularity of the matrix $I_n+E_{21}$. 
Then, the result can be easily checked by performing directly the matrix multiplications.
\end{proof}

\begin{remark}\label{remark}
The idea of recovering the ``companion structure'' of a perturbed companion matrix by using transformations close to the identity as in the proof of Lemma \ref{lemma:recover-comp} has appeared several times in the context of studying the numerical stability of solving polynomial eigenvalue problems by linearization \cite{NN,NP,VanDooren}.
\end{remark}

Applying Lemma \ref{lemma:recover-comp} to the perturbed companion matrix in the TOAR decomposition \eqref{eq:TOAR-pert}, we obtain
\begin{align}\label{eq:Arnoldi-perturb}
\begin{split}
\begin{bmatrix}
A+\Delta A & B+\Delta B \\ I_n & 0 
\end{bmatrix}
&\begin{bmatrix}
I_n & (I_n+E_{21})^{-1}E_{22}\\
0 & (I_n+E_{21})^{-1}
\end{bmatrix}
(I_2\otimes Q_k)\mathbf{U}_k\\ &\hspace{1cm} = 
\begin{bmatrix}
I_n & (I_n+E_{21})^{-1}E_{22}\\
0 & (I_n+E_{21})^{-1}
\end{bmatrix}
(I_2\otimes Q_{k+1})\mathbf{U}_{k+1}
\underline{H}_k,
\end{split}
\end{align}
where the matrices $\Delta A$ and $\Delta B$ are defined in \eqref{eq:DeltaA}--\eqref{eq:DeltaB}.
Then, introducing the new matrices
\begin{equation}\label{eq:W}
\mathbf{W}_i =
\begin{bmatrix}
W_{i}^{[1]}\\ W_{i}^{[2]}
\end{bmatrix} :=
\begin{bmatrix}
Q_iU_{i}^{[1]}+(I_n+E_{21})^{-1}E_{22}Q_i U_{i}^{[2]}\\
(I_n+E_{21})^{-1}Q_i U_{i}^{[2]}
\end{bmatrix}, \quad \mbox{for }i=k,k+1,
\end{equation}
the decomposition \eqref{eq:Arnoldi-perturb} becomes
\begin{equation}\label{eq:new-Arnoldi-decomp}
\begin{bmatrix}
A+\Delta A & B+\Delta B \\ I_n & 0 
\end{bmatrix}\begin{bmatrix}
W_{k}^{[1]}\\ W_{k}^{[2]}
\end{bmatrix}
=\begin{bmatrix}
W_{k+1}^{[1]}\\ W_{k+1}^{[2]}
\end{bmatrix}\underline{H}_k.
\end{equation}

The compact representation of the matrix $\mathbf{V}_k=(I_2\otimes Q_k)\mathbf{U}_k$ is destroyed after  premultiplying $\mathbf{V}_k$ by the matrix $\left[\begin{smallmatrix}
I_n & (I_n+E_{21})^{-1}E_{22}\\
0 & (I_n+E_{21})^{-1}
\end{smallmatrix}\right]$.
Nevertheless, the obtained decomposition \eqref{eq:new-Arnoldi-decomp} is an exact Arnoldi decomposition for a companion matrix.
Hence, we obtain
\begin{equation}\label{eq:SOKS-W}
\mathrm{span}\left\{\begin{bmatrix} W_{k}^{[1]} & W_{k}^{[2]} \end{bmatrix} \right\}=
 \mathcal{G}_k(A+\Delta A,B+\Delta B;\widetilde{r}_{-1},\widetilde{r}_0),
\end{equation}
for some vectors $\widetilde{r}_{-1},\widetilde{r}_0$ and some  matrices $A+\Delta A$ and $B+\Delta B$.
In Lemma \ref{lemma:basis}, we obtain a  basis for the subspace  \eqref{eq:SOKS-W}.

\begin{lemma}\label{lemma:basis}
Let \eqref{eq:TOAR-pert} be the TOAR decomposition for a slightly perturbation of a companion matrix $C$ as in \eqref{eq:companion}, and let
 $\mathbf{W}_i$, with $i=k,k+1$, be the matrices defined in \eqref{eq:W}.
 If $\|E_{21}\|_2<1$, then
\[
\mathrm{span}\left\{\begin{bmatrix} W_{k}^{[1]} & W_{k}^{[2]} \end{bmatrix} \right\}= \mathrm{span}\left\{ (I_n+E_{21})^{-1}Q_{k} \right\},
\]
and $\mathrm{dim}\left(\mathrm{span}\left\{ (I_n+E_{21})^{-1}Q_{k} \right\}\right)=d_k$.
\end{lemma}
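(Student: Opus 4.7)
The plan is to prove the two inclusions of the claimed set equality separately; the dimension assertion will then follow immediately from the invertibility of $I_n+E_{21}$ (guaranteed by $\|E_{21}\|_2<1$) and the full column rank of $Q_k$.

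For the forward inclusion $\mathrm{span}\{[W_k^{[1]}\ W_k^{[2]}]\}\subseteq\mathrm{span}\{(I_n+E_{21})^{-1}Q_k\}$, the case of $W_k^{[2]}$ is immediate from the definition \eqref{eq:W}. For $W_k^{[1]}$ I would read off the bottom block of the perturbed TOAR decomposition \eqref{eq:TOAR-pert},
\[
(I_n+E_{21})Q_kU_k^{[1]} + E_{22}Q_kU_k^{[2]} = Q_{k+1}U_{k+1}^{[2]}\underline{H}_k,
\]
and exploit the zero row in the prescribed structure of $U_{k+1}^{[2]}$ to rewrite the right-hand side as $Q_k[U_k^{[2]}\ y_k]\underline{H}_k$. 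Solving for $Q_kU_k^{[1]}$ using the invertibility of $I_n+E_{21}$ and substituting into the definition of $W_k^{[1]}$ in \eqref{eq:W}, the two occurrences of $(I_n+E_{21})^{-1}E_{22}Q_kU_k^{[2]}$ should cancel, yielding the clean identity $W_k^{[1]}=(I_n+E_{21})^{-1}Q_k[U_k^{[2]}\ y_k]\underline{H}_k$. Combining both formulas gives the factorisation
\[
\bigl[W_k^{[1]}\ W_k^{[2]}\bigr]=(I_n+E_{21})^{-1}Q_k\,M,\qquad M:=\bigl[[U_k^{[2]}\ y_k]\underline{H}_k\ \ U_k^{[2]}\bigr]\in\mathbb{C}^{d_k\times 2k},
\]
from which the forward inclusion is immediate.

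Since $(I_n+E_{21})^{-1}Q_k$ has full column rank $d_k$, the reverse inclusion reduces to showing that $M$ has full row rank $d_k$. For this, I would exploit the upper-Hessenberg shape of $\underline{H}_k$: the vector $y_k$ enters the product $[U_k^{[2]}\ y_k]\underline{H}_k$ only through the last column and with coefficient $h_{k+1,k}$, so under the standard unreduced-Hessenberg assumption $h_{k+1,k}\neq 0$, elementary column operations recover $y_k$ as a column of $M$, giving $\mathrm{span}\{M\}=\mathrm{span}\{[U_k^{[2]}\ y_k]\}$. The hard part is then the final equality $\mathrm{rank}([U_k^{[2]}\ y_k])=d_k$; this I would derive from $Q_k[U_k^{[2]}\ y_k]=Q_{k+1}U_{k+1}^{[2]}$ together with the minimality of $Q_k$ as a basis for $\mathrm{span}\{[V_k^{[1]}\ V_k^{[2]}]\}$ that is implicit in the TOAR construction, which forces the block $V_{k+1}^{[2]}$ to have column rank exactly $d_k$.
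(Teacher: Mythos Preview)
Your argument for the forward inclusion $\mathrm{span}\{[W_k^{[1]}\ W_k^{[2]}]\}\subseteq\mathrm{span}\{(I_n+E_{21})^{-1}Q_k\}$ is essentially the paper's. The paper reads off the bottom block of \eqref{eq:new-Arnoldi-decomp} (equivalently, of \eqref{eq:TOAR-pert}, as you do) to obtain $W_k^{[1]}=(I_n+E_{21})^{-1}Q_{k+1}U_{k+1}^{[2]}\underline{H}_k$ and then uses the zero bottom row of $U_{k+1}^{[2]}$; your cancellation yielding $W_k^{[1]}=(I_n+E_{21})^{-1}Q_k[\,U_k^{[2]}\ y_k\,]\underline{H}_k$ is the same identity written slightly differently.

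For the reverse inclusion you are doing strictly more than the paper: the paper's proof stops after the forward inclusion and the observation $\mathrm{rank}((I_n+E_{21})^{-1}Q_k)=d_k$, and never actually argues the equality of the two spans. Your attempt to supply this, however, has two genuine gaps. First, the step recovering $y_k$ from the columns of $M$ requires $h_{k+1,k}\neq 0$, which is not among the hypotheses of Theorem~\ref{thm:main} or Lemma~\ref{lemma:basis}. Second, and more seriously, the key claim $\mathrm{rank}([\,U_k^{[2]}\ y_k\,])=d_k$ is not derivable from those hypotheses: full \emph{column} rank of $\mathbf{U}_k$ and $\mathbf{U}_{k+1}$ says nothing about the row rank of $[\,U_k^{[2]}\ y_k\,]\in\mathbb{C}^{d_k\times(k+1)}$, and your appeal to ``minimality of $Q_k$\ldots implicit in the TOAR construction'' imports a property of the actual TOAR algorithm (where indeed $\mathrm{span}\{Q_k\}=\mathrm{span}\{V_{k+1}^{[2]}\}$ holds by construction) that is not assumed in the abstract setting of the lemma. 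Your reverse-inclusion argument can therefore be made to work for genuine TOAR output under an unreducedness assumption, but it does not follow from the hypotheses as stated.
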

\begin{proof}
From Lemma \ref{lemma:recover-comp} and the hypothesis $\|E_{21}\|_2<1$, we obtain that the matrices $\mathbf{W}_i$, with $i=k,k+1$, satisfy the Arnoldi decomposition \eqref{eq:new-Arnoldi-decomp}. 
Examining the bottom block of $\mathbf{W}_k$ in \eqref{eq:W}, we readily obtain that $\mathrm{span}\{ W_{k}^{[2]}\}\subseteq\mathrm{span}\{ (I_n+E_{21})^{-1}Q_{k}\}$.
Further, from the bottom block of \eqref{eq:new-Arnoldi-decomp}, together with \eqref{eq:W}, we obtain
\[
W_k^{[1]}=(I_n+E_{21})^{-1}Q_{k+1}U_{k+1}^{[2]}\underline{H}_k.
\]
Then, from the fact that the matrix $U_{k+1}^{[2]}$ is of the form (recall Remark \ref{remark})
\[
U_{k+1}^{[2]} = 
\begin{bmatrix}
U_k^{[2]} & y_k \\
0 & 0
\end{bmatrix},
\]
for some vector $y_k$, we  obtain that the columns of $W_k^{[1]}$ are linear combinations of only the first $d_k$ columns of $(I_n+E_{21})^{-1}Q_{k+1}$, i.e., the columns of $(I_n+E_{21})^{-1}Q_k$.
Therefore, $\mathrm{span}\{ W_{k}^{[1]}\}\subseteq\mathrm{span}\{ (I_n+E_{21})^{-1}Q_{k}\}$.
Finally, it is clear that $d_k=\mathrm{rank}(Q_k)=\mathrm{rank}((I_n+E_{21})^{-1}Q_k)$.
\end{proof}

The last auxiliary result for the proof of Theorem \ref{thm:main} is Lemma \ref{lemma:distance}, which  shows how a subspace spanned by the columns of a matrix $A$ behaves under multiplicative perturbations of the matrix $A$.
\begin{lemma}{\rm \cite[Theorem 3.3]{Pseudo}}\label{lemma:distance}
Let $A\in\mathbb{C}^{m\times n}$ and $\widetilde{A}=(I_m+E)A\in\mathbb{C}^{m\times n}$, where $(I_m+E)\in\mathbb{C}^{m\times m}$ is nonsingular.
Then,
\[
\mathrm{dist}(\mathcal{S},\widetilde{\mathcal{S}})\leq
\min\{\|E\|_2,\|(I_m+E)^{-1}E\|_2 \},
\]
where $\mathcal{S}$ and $\widetilde{\mathcal{S}}$ are the subspaces spanned, respectively, by the columns of the matrices $A$ and $\widetilde{A}$.
\end{lemma}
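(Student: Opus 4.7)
The plan is to prove the two bounds inside the minimum separately. Both rest on the fact that, since $I_m+E$ is nonsingular, $\mathrm{rank}(\widetilde{A}) = \mathrm{rank}((I_m+E)A) = \mathrm{rank}(A)$, so $\mathcal{S}$ and $\widetilde{\mathcal{S}}$ have the same dimension, which makes the distance \eqref{eq:def-distance} well defined and gives access to the classical identity
\[
\mathrm{dist}(\mathcal{S},\widetilde{\mathcal{S}}) = \|P_{\mathcal{S}} - P_{\widetilde{\mathcal{S}}}\|_2 = \|(I_m - P_{\widetilde{\mathcal{S}}})P_{\mathcal{S}}\|_2 = \|(I_m - P_{\mathcal{S}})P_{\widetilde{\mathcal{S}}}\|_2,
\]
which I would cite from standard perturbation theory (e.g., the CS decomposition viewpoint in \cite{Pete}).

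For the first bound $\mathrm{dist}(\mathcal{S},\widetilde{\mathcal{S}}) \leq \|E\|_2$, I would compute $\|(I_m - P_{\widetilde{\mathcal{S}}})P_{\mathcal{S}}\|_2$ variationally. Any unit vector $x \in \mathcal{S}$ can be written as $x = Ay$ for some $y$, and then $\widetilde{A}y = (I_m+E)Ay = x + Ex$ is an element of $\widetilde{\mathcal{S}}$. Since orthogonal projection onto $\widetilde{\mathcal{S}}$ gives the nearest point, we have $\|(I_m-P_{\widetilde{\mathcal{S}}})x\|_2 \leq \|x - \widetilde{A}y\|_2 = \|Ex\|_2 \leq \|E\|_2$. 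Taking the supremum over unit $x \in \mathcal{S}$ yields the first bound.

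For the second bound $\mathrm{dist}(\mathcal{S},\widetilde{\mathcal{S}}) \leq \|(I_m+E)^{-1}E\|_2$, the idea is to invert the multiplicative perturbation and reapply the first bound with the roles of $A$ and $\widetilde{A}$ swapped. Set $F := -(I_m+E)^{-1}E$, so that $I_m+F = (I_m+E)^{-1}$ is nonsingular and $A = (I_m+F)\widetilde{A}$. Applying the argument of the previous paragraph with $A$ and $\widetilde{A}$ interchanged and $E$ replaced by $F$ gives $\mathrm{dist}(\widetilde{\mathcal{S}},\mathcal{S}) \leq \|F\|_2 = \|(I_m+E)^{-1}E\|_2$. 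Since the distance \eqref{eq:def-distance} is symmetric in its arguments, this yields the second bound, and taking the minimum of the two estimates completes the proof.

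There is no real obstacle here; the only points to be careful about are (i) justifying use of the equal-dimension identity for the projector difference, which follows at once from nonsingularity of $I_m+E$, and (ii) the algebraic verification that $I_m - (I_m+E)^{-1}E = (I_m+E)^{-1}$, which makes the role-swap argument for the second bound work cleanly.
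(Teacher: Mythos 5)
Your proof is correct. Note first that the paper itself gives no proof of this lemma: it is imported verbatim as Theorem 3.3 of the cited reference \cite{Pseudo}, where it arises as a byproduct of a general multiplicative perturbation theory for the Moore--Penrose pseudoinverse, developed through explicit formulas relating $\widetilde{A}^\dagger$ to $A^\dagger$ and the associated orthogonal projectors $AA^\dagger$ and $\widetilde{A}\widetilde{A}^\dagger$. Your argument is genuinely different and considerably more elementary: you never touch the pseudoinverse, instead combining (i) the rank-preservation under the nonsingular factor $I_m+E$, which legitimizes the equal-dimension identity $\|P_{\mathcal{S}}-P_{\widetilde{\mathcal{S}}}\|_2=\|(I_m-P_{\widetilde{\mathcal{S}}})P_{\mathcal{S}}\|_2$ (this identity does require equal dimensions --- otherwise the distance is $1$ --- so your explicit justification is not pedantry but load-bearing), with (ii) the variational nearest-point property of orthogonal projection applied to the candidate $\widetilde{A}y=x+Ex\in\widetilde{\mathcal{S}}$ for each unit $x=Ay\in\mathcal{S}$, and (iii) the clean role-swap via $A=(I_m+F)\widetilde{A}$ with $F=-(I_m+E)^{-1}E$, whose key algebraic identity $I_m-(I_m+E)^{-1}E=(I_m+E)^{-1}$ you verify correctly, together with the symmetry of the metric \eqref{eq:def-distance}. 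What each approach buys: the route through \cite{Pseudo} embeds the bound in a larger theory (perturbation of least squares solutions and of $A^\dagger$ itself), which this paper does not need; your route yields a short, self-contained proof of exactly the statement used in the proof of Theorem \ref{thm:main}, at the cost of invoking the standard projector-difference identity from \cite{Pete}, which is anyway classical.
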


We are finally in a position to prove Theorem \ref{thm:main}.

\begin{proof} {\bf  (of Theorem \ref{thm:main})}. 
Recall that $E=-R\mathbf{U}_k^\dagger(I_2\otimes Q_k^\dagger)$.
Since $\|E\|_2<1$ and, thus, $\|E_{21}\|_2<1$, we obtain from Lemmas \ref{lemma:back-to-C} and \ref{lemma:recover-comp} that the matrices $\mathbf{W}_i$, with $i=k,k+1$,  defined in \eqref{eq:W} satisfy the Arnoldi decomposition \eqref{eq:new-Arnoldi-decomp}.
Therefore, \eqref{eq:SOKS-W} is  a $d_k$--dimensional second--order Krylov subspace associated with matrices $A+\Delta A$ and $B+\Delta B$, with $\Delta A$ and $\Delta B$ as in \eqref{eq:DeltaA} and \eqref{eq:DeltaB}, respectively.
Furthermore, from \eqref{eq:DeltaA}, we obtain
\[
\|\Delta A\|_2 \leq \|E\|_2+\frac{\|E\|_2(1+\|E\|_2)}{1-\|E\|_2},
\] 
and from \eqref{eq:DeltaB}, we obtain
\[
\|\Delta B\|_2 \leq \max\{1,\|A\|_2,\|B\|_2\}\left(
\|E\|_2(2+\|E\|_2)+\frac{\|E\|_2(1+\|E\|_2)^2}{1-\|E\|_2}
\right),
\]
where we have used $\|(I_n+E_{21})^{-1}\|_2\leq (1-\|E_{21}\|_2)^{-1}$ and $\|E_{ij}\|_2\leq \|E\|_2$, for $i,j=1,2$, for obtaining both upper bounds.

From Lemma \ref{lemma:basis}, we obtain that the columns of $\widetilde{Q}_k:=(I_n+E_{21})^{-1}Q_{k}$ form a basis for  the second--order Krylov subspace \eqref{eq:SOKS-W}.
Let $\widetilde{\mathcal{S}}_k$ be the subspace spanned by the columns of $\widetilde{Q}_k$.
To finish the proof of Theorem \ref{thm:main}, it suffices to  bound the distance between the subspaces $\mathcal{S}_k$ and $\widetilde{\mathcal{S}}_k$ from above.
Writing $Q_k=(I_n+E_{21})\widetilde{Q}_k$, we immediately obtain from Lemma \ref{lemma:distance} 
\[
\mathrm{dist}(\mathcal{S}_k,\widetilde{\mathcal{S}}_k)\leq
\min\{\|E_{21}\|_{2},\|(I_n+E_{21})^{-1}E_{21}\|_2 \}\leq 
\frac{\|E\|_2}{1-\|E\|_2},
\]
and the proof is completed.
\end{proof}

\section{Scaling the quadratic problem for improving the numerical stability of the TOAR procedure}\label{sec:scaling}
In this section, we study the effect of scaling the original quadratic problem on the stability of the TOAR procedure.

A reasonable definition of a stable algorithm for computing an orthonormal basis of a second--order Krylov subspace $\mathcal{G}_k(A,B;r_{-1},r_0)$ is to require that the algorithm  computes an  exact basis of (or, up to machine precision, a basis close to)  a second--order Krylov subspace associated with matrices $A+\Delta A$ and $B+\Delta B$, with $\Delta A$ and $\Delta B$ satisfying
\begin{equation*}\label{eq:ideal-normwise}
\max\{\|\Delta A\|_2,\|\Delta B\|_2 \}= O(\epsilon) \max\{ \|A\|_2,\|B\|_2\},
\end{equation*}
or the more stringent condition
\begin{equation*}\label{eq:ideal-coeffwise}
\max\left\{ \frac{\|\Delta A\|_2}{\|A\|_2}, \frac{\|\Delta B\|_2}{\|B\|_2}\right\}= O(\epsilon).
\end{equation*}
In the former case, we would say that the algorithm is normwise stable, and in the latter,  coefficientwise stable.

From Corollary \ref{cor:stability}, we see that the TOAR method fails to be stable in two situations, namely, when the norms of $A$ and $B$ are much smaller or much bigger than 1.
We show in this section that in any of these situations the computed subspace could gain in accuracy when using the TOAR method on an appropriate scaling of the quadratic problem.

Scaling the quadratic problem consists in replacing the matrices $A$ and $B$ by the matrices $A_\alpha:=\alpha A$ and $B_\alpha:=\alpha^2 B$, where $\alpha$ is a nonzero positive real number.
This operation is reflected  as a scaling of the eigenvalues of the quadratic eigenvalue problem or as a scaling of the frequencies of the transfer function of the second--order dynamical system.
The companion matrix associated with $A_\alpha$ and $B_\alpha$ is 
\begin{equation}\label{eq:scaling-C}
C_\alpha := \begin{bmatrix}
A_\alpha & B_\alpha \\ I_n & 0
\end{bmatrix}=\alpha
\begin{bmatrix}
I_n & 0 \\ 0 & \alpha^{-1}I_n
\end{bmatrix}
\begin{bmatrix}
A & B \\ I_n & 0
\end{bmatrix}
\begin{bmatrix}
I_n & 0 \\ 0 & \alpha I_n
\end{bmatrix}.
\end{equation}

According to the analysis in Section \ref{sec:analysis}, applying the TOAR procedure to the companion matrix $C_\alpha$ produces a computed matrix $Q_k$ satisfying a TOAR decomposition (recall Corollary \ref{cor:stability}) of the form
\begin{equation}\label{eq:TOAR-scaling}
\begin{bmatrix}
A_\alpha + \Delta A_\alpha & B_\alpha + \Delta B_\alpha \\ I_n & 0 \end{bmatrix}(I_2\otimes \widetilde{Q}_k)\widetilde{\mathbf{U}}_k = 
(I_2\otimes \widetilde{Q}_{k+1})\widetilde{\mathbf{U}}_{k+1}\underline{H}_k,
\end{equation}
for some matrices $\widetilde{\mathbf{U}}_{k}$ and $\widetilde{\mathbf{U}}_{k+1}$, and some matrices $\Delta A_\alpha$ and $\Delta B_\alpha$ with
\begin{align*}\label{eq:DeltaA-DeltaB-alpha}
\begin{split}
&\|\Delta A_\alpha\|_2= O(\epsilon)\max\{1,\|A_\alpha \|_2,\|B_\alpha \|_2 \}, \quad \mbox{and} \\
&\|\Delta B_\alpha \|_2=O(\epsilon) \left(\max\{1,\|A_\alpha \|_2,\|B_\alpha\|_2 \}\right)^2,
\end{split}
\end{align*}
and where $\widetilde{Q}_k$ is a matrix such that
\[
\mathrm{dist}(\mathrm{span}(\widetilde{Q}_k),\mathrm{span}(Q_k))=O(\epsilon) \max\{1,\|A_\alpha \|_2,\|B_\alpha \|_2 \}.
\]

Undoing the scaling by using \eqref{eq:scaling-C}, we obtain from \eqref{eq:TOAR-scaling} the perturbed TOAR decomposition
\[
\begin{bmatrix}
A + \alpha^{-1} \Delta A_\alpha & B + \alpha^{-2}\Delta B_\alpha \\ I_n & 0 \end{bmatrix} (I_2\otimes \widetilde{Q}_k)\widehat{\mathbf{U}}_k = 
(I_2\otimes \widetilde{Q}_{k+1})\widehat{\mathbf{U}}_{k+1}\underline{\widehat{H}}_k,
\]
where one $\alpha$ has been absorbed by $\underline{H}_k$ and the other two $\alpha$'s have been absorbed by the bottom blocks of $\widetilde{\mathbf{U}}_{k}$ and $\widetilde{\mathbf{U}}_{k+1}$.
We conclude that the computed matrix $Q_k$ is such that the subspace spanned by its columns is within a distance 
\begin{equation}\label{eq:dist-alpha}
O(\epsilon) \max\{1,\alpha \|A\|_2,\alpha^2 \|B\|_2 \}
\end{equation}
 of a second--order Krylov subspace associated with matrices $A+\alpha^{-1}\Delta A_\alpha=:A+\Delta A$ and $B+\alpha^{-2}\Delta B_\alpha =: B+\Delta B$ with
\begin{align}\label{eq:DeltaA-DeltaB-alpha2}
\begin{split}
&\|\Delta A\|_2= O(\epsilon)\alpha^{-1}\max\{1,\alpha\|A\|_2,\alpha^2\|B\|_2 \}, \quad \mbox{and}\\
&\|\Delta B \|_2=O(\epsilon) \alpha^{-2}\left(\max\{1,\alpha\|A \|_2,\alpha^2\|B\|_2 \}\right)^2.
\end{split}
\end{align}
Hence, the problem of choosing an optimal scaling parameter $\alpha$ for improving the stability of the TOAR procedure is reduced to the problem of minimizing \eqref{eq:dist-alpha} and \eqref{eq:DeltaA-DeltaB-alpha2} over $\alpha\in\mathbb{R}^+$.

We attempt to find a good choice of the scaling parameter $\alpha$  by minimizing the function
\[
f(\alpha):=\frac{1+\|A\|_2\alpha+\|B\|_2\alpha^2}{\alpha},
\]
which is essentially equivalent to minimizing \eqref{eq:DeltaA-DeltaB-alpha2}.
We find that $\alpha_{\rm opt}:=\|B\|_2^{-1/2}$ is a local minimum of $f(\alpha)$. 
This scaling parameter corresponds to the scaling  introduced by Fan, Lin and Van Dooren \cite{FLD} for improving the backward stability of solving quadratic matrix polynomials by linearization.

We summarize in Theorem \ref{thm:scaling} the effect of scaling the quadratic problem with $\alpha=\alpha_{\rm opt}$  on the stability of computing an orthonormal basis of the second--order Krylov subspace $\mathcal{G}_k(A,B;r_{-1},r_0)$ by applying the TOAR method to the scaled companion matrix $C_{\alpha_{\rm opt}}$.
\begin{theorem}\label{thm:scaling}
Let $A,B\in\mathbb{C}^{n\times n}$, let $\alpha_{\rm opt}=\|B\|_2^{-1/2}$, and let $Q_k\in\mathbb{C}^{n\times d_k}$ be the computed matrix by the TOAR method applied to the scaled companion matrix $C_{\alpha_{\rm opt}}$ in a computer with unit roundoff equal to $\epsilon$.
Then, the following statements hold.
\begin{itemize}
\item[\rm (i)] If $\|A\|_2 \leq \|B\|_2^{1/2}$, then the subspace spanned by the columns of $Q_k$ is within a distance $O(\epsilon)$ of a second--order Krylov subspace associated with matrices $A+\Delta A$ and $B+\Delta B$ such that
\[
\|\Delta A\|_2=O(\epsilon)\|B\|_2^{1/2}\quad \mbox{and} \quad \frac{\|\Delta B\|_2}{\|B\|_2} = O(\epsilon) .
\]
\item[\rm (ii)] If $\|A\|_2 \approx \|B\|_2^{1/2}$, then the subspace spanned by the columns of $Q_k$ is within a distance $O(\epsilon)$ of a second--order Krylov subspace associated with matrices $A+\Delta A$ and $B+\Delta B$ such that
\[
\max\left\{ \frac{\|\Delta A\|_2}{\|A\|_2}, \frac{\|\Delta B\|_2}{\|B\|_2}\right\}= O(\epsilon).
\]
\item[\rm (iii)] If $\|A\|_2>\|B\|_2^{1/2}$, then the subspace spanned by the columns of $Q_k$ is within a distance $O(\epsilon)\|B\|_2^{-1/2}\|A\|_2$ of a second--order Krylov subspace associated with matrices $A+\Delta A$ and $B+\Delta B$ such that
\[
\frac{\|\Delta A\|_2}{\|A\|_2}=O(\epsilon) \quad \mbox{and}\quad \|\Delta B\|_2 = O(\epsilon) \|A\|_2^2.
\]
\end{itemize}
\end{theorem}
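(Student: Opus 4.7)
The plan is to prove Theorem \ref{thm:scaling} by a direct substitution of $\alpha=\alpha_{\rm opt}=\|B\|_2^{-1/2}$ into the distance bound \eqref{eq:dist-alpha} and the perturbation bounds \eqref{eq:DeltaA-DeltaB-alpha2} already derived in the discussion preceding the theorem, followed by a case analysis on the magnitude of $\|A\|_2$ relative to $\|B\|_2^{1/2}$. Since \eqref{eq:dist-alpha} and \eqref{eq:DeltaA-DeltaB-alpha2} are immediate consequences of Corollary \ref{cor:stability} applied to the scaled problem and the scaling identity \eqref{eq:scaling-C}, no further structural work is required; the proof is essentially bookkeeping.

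First, I would observe that with $\alpha=\alpha_{\rm opt}$ we have $\alpha^2\|B\|_2 = 1$ and $\alpha\|A\|_2 = \|A\|_2/\|B\|_2^{1/2}$, so the common factor appearing in both \eqref{eq:dist-alpha} and \eqref{eq:DeltaA-DeltaB-alpha2} simplifies to
\[
M := \max\{1,\alpha\|A\|_2,\alpha^2\|B\|_2\} = \max\!\left\{1,\tfrac{\|A\|_2}{\|B\|_2^{1/2}}\right\}.
\]
Hence the three bounds can be rewritten as
\[
\mathrm{dist} = O(\epsilon)\,M,\qquad \|\Delta A\|_2 = O(\epsilon)\|B\|_2^{1/2}M,\qquad \|\Delta B\|_2 = O(\epsilon)\|B\|_2\,M^2.
\]

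Next, I would carry out the case analysis. In case (i), $\|A\|_2\leq\|B\|_2^{1/2}$ gives $M=1$, and substitution yields $\mathrm{dist} = O(\epsilon)$, $\|\Delta A\|_2 = O(\epsilon)\|B\|_2^{1/2}$, and $\|\Delta B\|_2 = O(\epsilon)\|B\|_2$, which is exactly the content of (i). In case (ii), $\|A\|_2\approx\|B\|_2^{1/2}$ again yields $M\approx 1$; here $\|\Delta A\|_2 = O(\epsilon)\|B\|_2^{1/2} = O(\epsilon)\|A\|_2$, which after dividing by $\|A\|_2$ and $\|B\|_2$ respectively gives the componentwise statement. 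In case (iii), $\|A\|_2 > \|B\|_2^{1/2}$ gives $M = \|A\|_2/\|B\|_2^{1/2}$, so $\mathrm{dist} = O(\epsilon)\|B\|_2^{-1/2}\|A\|_2$, $\|\Delta A\|_2 = O(\epsilon)\|A\|_2$, and $\|\Delta B\|_2 = O(\epsilon)\|B\|_2\cdot\|A\|_2^2/\|B\|_2 = O(\epsilon)\|A\|_2^2$, matching (iii).

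There is no real obstacle: the substantive content of the theorem lives in Corollary \ref{cor:stability} and in the identification of $\alpha_{\rm opt}$ as a minimizer of $f(\alpha)$, both of which are already established before the statement. The only point requiring a brief comment in the write-up is that absorbing factors of $\alpha$ into $\underline{H}_k$ and into the bottom blocks of $\widetilde{\mathbf{U}}_k,\widetilde{\mathbf{U}}_{k+1}$ (as done in the derivation of \eqref{eq:DeltaA-DeltaB-alpha2}) preserves the TOAR structure, so that applying Corollary \ref{cor:stability} to the scaled companion matrix $C_{\alpha_{\rm opt}}$ genuinely produces a second--order Krylov subspace for the \emph{unscaled} coefficients $A+\Delta A$ and $B+\Delta B$ with the claimed bounds.
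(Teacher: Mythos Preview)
Your proposal is correct and follows exactly the same approach as the paper, whose proof consists of a single sentence stating that the results readily follow from \eqref{eq:dist-alpha} and \eqref{eq:DeltaA-DeltaB-alpha2} with $\alpha=\alpha_{\rm opt}$. Your introduction of the quantity $M=\max\{1,\|A\|_2/\|B\|_2^{1/2}\}$ and the ensuing case analysis simply make explicit the bookkeeping that the paper leaves to the reader.
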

\begin{proof}
The results readily follow from \eqref{eq:dist-alpha} and \eqref{eq:DeltaA-DeltaB-alpha2} with $\alpha=\alpha_{\rm opt}$.
\end{proof}

From Theorem \ref{thm:scaling}, we obtain  that the TOAR procedure applied to the scaled companion matrix $C_{\alpha_{\rm opt}}$ is normwise stable in computing an orthonormal basis of $\mathcal{G}_k(A,B;r_{-1},r_0)$ in the case where $\|A\|_2 \leq \|B\|_2^{1/2}$.
When $\|A\|_2 \approx \|B\|_2^{1/2}$, the method is actually coefficientwise stable.
However, when $\|A\|_2\gg\|B\|_2^{1/2}$, the scaling with $\alpha=\alpha_{\rm opt}$ does not resolve the stability issues of the TOAR method.
In the language of quadratic matrix polynomials, this situation corresponds to the so called heavily damped quadratic matrix polynomials \cite{quadeig}, and it is still an open problem to devise  simple scaling strategies for those.
\begin{remark}
When $\|A\|_2\gg\|B\|_2^{1/2}$, we could also consider the scaling with parameter $\alpha=\|A\|_2^{-1}$.
In this case, we would obtain that the subspace spanned by the columns of the computed matrix $Q_k$ by the TOAR method applied to $C_\alpha$ is withing a distance $O(\epsilon)$ of a second--order Krylov subspace associated with matrices $A+\Delta A$ and $B+\Delta B$ such that 
\[
\frac{\|\Delta A\|_2}{\|A\|_2}=O(\epsilon) \quad \mbox{and}\quad \|\Delta B\|_2 = O(\epsilon) \|A\|_2^2,
\]
which is an improvement over part--{\rm (iii)} in Theorem \ref{thm:scaling} and Corollary \ref{cor:stability}.
\end{remark}

\section{Conclusions}

Second--order Krylov subspace projection methods combined with the TOAR procedure have demonstrated superior numerical results over the standard approaches based on linearization for the solution of quadratic eigenvalue problems and for model order reduction of second--order dynamical systems.
In this work, we have shown that the computed basis by the TOAR method for the subspace $\mathcal{G}_k(A,B;r_{-1},r_0)$ is, up to machine precision, a second--order Krylov subspace associated with nearby matrices $A+\Delta A$ and $B+\Delta B$, providing to the observed numerical superiority a solid theoretical foundation.
We have also considered the effect of scaling the original quadratic problem on the numerical stability of the TOAR method in computing an orthonormal basis of $\mathcal{G}_k(A,B;r_{-1},r_0)$, and showed that in many situations the TOAR procedure applied to a scaled companion matrix is normwise, or even coefficientwise stable, in computing such a basis. 

%

\end{document}